\newtheorem{theorem}{Theorem}
\newtheorem{definition}{Definition}
\newtheorem{proposition}{Proposition}
\newtheorem{lemma}{Lemma}
\newtheorem{remark}{Remark}
\renewcommand{\leq}{\leqslant}
\renewcommand{\geq}{\geqslant}
\newcommand{\E}{\mathcal{E}}
\newcommand{\B}{\mathcal{B}}
\newcommand{\Z}{{\mathbb{Z}}}
\newcommand{\N}{{\mathbb{N}}}
\renewcommand{\P}{{\mathbb{P}}}
\DeclarePairedDelimiter{\floor}{\lfloor}{\rfloor}
\begin{document}
\thispagestyle{empty}
\baselineskip=28pt
\vskip 5mm
\begin{center} {\LARGE{\bf Exponential decay for Constrained-degree percolation}}
\end{center}

\baselineskip=15pt
\vskip 20mm

\begin{center}\large
Diogo C. dos Santos\footnote{Instituto de Matemática, Universidade Federal de Alagoas, Brazil, \href{mailto:diogo.santos@im.ufal.br}{diogo.santos@im.ufal.br.}}  and Roger W. C. Silva 
 \footnote{Corresponding Author - Departamento de Estat\'{\i}stica,
 Universidade Federal de Minas Gerais, Brazil, \href{mailto:rogerwcs@est.ufmg.br} {rogerwcs@est.ufmg.br.}}
\\ 


\end{center}

\begin{abstract} We consider the Constrained-degree percolation model in random environment (CDPRE) on the square lattice. 
In this model,  each vertex $v$ has an independent random constraint $\kappa_v$ which takes the value $j\in \{0,1,2,3\}$ with probability $\rho_j$. The dynamics is as follows: at time $t=0$ all edges are closed; each edge $e$ attempts to open at a random time $U_e\sim \mathrm{U}(0,1]$, independently of all other edges. It succeeds if at time $U_e$ both  its end-vertices have degrees strictly smaller than their respective constraints.   We obtain exponential decay of the radius of the open cluster of the origin at all times when its expected size is finite. Since CDPRE is dominated by Bernoulli percolation, such result is meaningful only if the supremum of all values of $t$ for which the expected size of the open cluster of the origin is finite is larger than 1/2. We prove this last fact by showing a sharp phase transition for an intermediate model.\\

\noindent{\it Keywords: exponential decay; dependent percolation, random environment; infinite range} 

\noindent {\it AMS 1991 subject classification: 60K35; 82B43; 82B26} 
\end{abstract}

\onehalfspacing
\section{Introduction}\label{intro}
Let $\mathbb{L}^2=(\mathbb{Z}^2,\E)$ be the usual square lattice. To each site $v\in\mathbb{Z}^2$ we associate independently a random variable $\kappa_v$ which takes the value $j\in\{0,1,2,3\}$ with probability $\rho_j$. Denote by $\P_{\rho}$ the corresponding product measure on $\{0,1,2,3\}^{\Z^2}$. Consider the following dependent continuous time percolation process: let $\{U_e\}_{e\in\E}$ be a collection of $i.i.d.$ random variables with uniform distribution on the interval $(0,1]$. At time $t=0$ all edges are closed; each edge $e=\langle u,v\rangle$ opens
 at time $U_e$ if $|\{z\in\mathbb{Z}^2-\{u\}\colon \langle z,u\rangle\mbox{ is open at time }U_e\}|
 < \kappa_{u}$ and $|\{z\in\mathbb{Z}^2-\{v\}\colon \langle z,v\rangle\mbox{ is open at time }U_e\}|
 < \kappa_{v}$. In words, at the random time $U_e$ 
 the edge $e$ attempts to open. It succeeds if both its endpoints have degrees smaller than their respective attached constraints. Once an edge is open, it remains open.
 
The model described above draws inspiration from its deterministic constraint version introduced in \cite{LSSST}. In the deterministic model, constraints are fixed to a constant value $\kappa$ for every vertex. The authors of \cite{LSSST} prove a non-trivial phase transition for the model on $\mathbb{L}^2$ when $\kappa=3$. In contrast, they show absence of percolation when $\kappa=2$, even at time $t=1$. In a recent paper, see \cite{HL}, the authors extend some of the results of \cite{LSSST}, proving a phase transition for the model on $\mathbb{L}^d$, $d\geq 2$, for several values of a constant deterministic $\kappa$. See \cite{Ga,GJ,GL,LZ} for other models with some type of constraint.

The random constraint version we approach in this work was initially introduced in \cite{SSS}. In that work, the authors show a non-trivial phase transition for the model on $\mathbb{L}^2$ when $\rho_3$ is sufficiently large, thus extending the main result of \cite{LSSST}.

A formal definition of the CDPRE model reads as follows. To each edge $e\in\E$ we assign independently a random variable $U_e \sim \mathrm{U}(0,1]$, independent of $\{\kappa_v\}_{v\in\Z^2}$. The corresponding product measure is denoted by $\mathbb{P}$. We think of $U_e$ as the time when edge $e$ attempts to open and usually refer to $\{U_e\}_{e\in\E}$ as a 
configuration of \textit{clocks}. Given a collection of constraints $\kappa=\{\kappa_v\}_{v\in\mathbb{Z}^2}$
and a clock configuration $U=\{U_e\}_{e\in\E}$, let 
\begin{equation*}\omega_t:\{0,1,2,3\}^{\Z^2}\times[0,1]^{\E}\rightarrow\{0,1\}^{\E}
\end{equation*}
be the function that associates the pair $(\kappa,U)$ to a
configuration $\omega_t(\kappa,U)$ of open 
and closed edges at time $t$. From now on, we use the short notation $\omega_t$ and denote by $\omega_{t,e}$ the status of the edge $e$
in the configuration $\omega_t$. We say an edge $e$ is $t$-open ($t$-closed) if $\omega_{t,e}=1$ ($\omega_{t,e}=0$).  Formally, writing ${\mathbbm{1}}_A$ for the indicator function of the event $A$ and $deg(v,t)$ for the degree of vertex $v$ in $\omega_t$, the configuration at edge $e=\langle u,v \rangle$ is written as \begin{equation*}\label{perc_conf}
 \omega_{t,e}={\mathbbm{1}}_{\{U_e\leq t\}}\times{\mathbbm{1}}_{\{\deg(u,U_e)<\kappa_{u}\}}\times{\mathbbm{1}}_{\{\deg(v,U_e)<\kappa_{v}\}}.
\end{equation*}

Using Harris' construction, it is straightforward  to show
that  $\omega_t$ is well defined for almost all sequences $U=\{U_e\}_{e\in\E}$ and $\kappa=\{\kappa_v\}_{v\in\mathbb{Z}^2}$ and all $t\in[0,1]$;  see discussion after Theorem 2 in \cite{SSS}.

Denote by $\P_{\rho,t}$ the pushforward product law governing $\omega_t$, that is, for any measurable set $A\subset\{0,1\}^{\E}$,
\begin{equation*}\label{prod_meas}
\mathbb{P}_{\rho,t}(A)
=(\P_{\rho}\times \P)(\omega_t^{-1}(A)).
\end{equation*}

What makes this model interesting is that, at any fixed time $t>0$, the configurations exhibit infinite-range dependencies. However, as we will show later, the dependence between the states of any two edges decays at least  exponentially as the distance between them increases (see Proposition \ref{log_dec} in Section \ref{sec_exp}), a fact that will be important in this work. 

\begin{remark}We stress that the model lacks the FKG property, which makes the analysis significantly harder. For instance, consider $\rho_3=1$ and $t>0$. Then, the probability that all four edges incident to some vertex $v$ are open at time $t$ vanishes, while the probability that any pair of such edges are open at time $t$ remains strictly positive.
\end{remark}

\subsection{Results and discussion}

Before we state our results, let us introduce some notation. A \textit{path} of $\mathbb{L}^2$ is an alternating sequence $v_0,e_0,v_1,e_1,\dots,e_{n-1},v_n$ of distinct vertices $v_j$ and edges $e_j=\langle v_j,v_{j+1}\rangle$. Such a path has length $n$ and is said to connect $v_0$ to $v_n$. A path is said to be open if all of its edges are open. Write $C_v$ for the open cluster of $v\in\Z^2$, i.e., the set of vertices connected to $v$ by an open path. By translation invariance of the probability measure, we take this vertex to be the origin and define the percolation and susceptibility critical thresholds
\begin{equation*}
t_c(\rho)=\sup\{t\in[0,1]:\P_{\rho,t}(|C|= \infty)=0\},
\end{equation*}
\begin{equation*}
\bar{t}_c(\rho)=\sup\{t\in[0,1]:\mathbb{E}_{\rho,t}(|C|)<\infty\},
\end{equation*}
respectively. Here $\mathbb{E}_{\rho,t}$ denotes expectation with respect to $\P_{\rho,t}$. Clearly, $\bar{t}_c(\rho)\leq t_c(\rho)$.

Let $u,v\in\mathbb{Z}^2$ and denote by $d(u,v)$
the graph distance between $u$ and $v$. Write $B(n)=[-n,n]^2$ for the box of side-length $2n$ centered at the origin. For $x\in\Z^2$, we define $B(x,n)=x+B(n)$. Given $\Gamma\subset\Z^2$, we write $\E(\Gamma)$ to denote the set of edges with both endpoints in $\Gamma$. We use $\partial \Gamma$ to denote the vertex boundary of $\Gamma$, being the set of vertices in $\Gamma$ which are adjacent to some vertex not in $\Gamma$. We also write $\partial^e \Gamma$ for the external edge boundary of $\Gamma$, i.e., the set of edges $e=\langle u,v \rangle$, with $u\in \Gamma$ and $v\notin \Gamma$.

It is not hard to see that the radius of the open cluster of the CDPRE model decays exponentially fast when $t<1/2$.  This follows since the model is stochastically dominated by independent Bernoulli percolation, and the fact that the radius of the open cluster of the latter model decays exponentially fast  (see \cite{AIB} and \cite{DT}) below its critical threshold (see \cite{K}). Theorem \ref{susce} below, whose proof is deferred to Section \ref{sharp}, gives that $\bar{t}_c(\rho)$ (and consequently $t_c(\rho)$) is strictly larger than $1/2$. It is therefore natural to ask: do we have exponential decay for all $t$ smaller than $t_c(\rho)$? We prove  exponential decay of the radius of the open cluster for all $t<\bar{t}_c(\rho)$, giving a partial answer to this question. A nice open problem consists in proving that the model exhibits a sharp phase transition, \textit{i.e.}, that the radius of the open cluster decays exponentially fast for all $t<t_c(\rho)$. In particular, this would give $t_c(\rho)=\bar{t}_c(\rho)$. 

  \begin{theorem}\label{susce} It holds that $\bar{t}_c(\rho)>\frac 12$. 
\end{theorem}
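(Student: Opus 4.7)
The plan is to leverage the fact that Bernoulli$(1/2)$ on $\mathbb{L}^2$ is exactly critical while the CDPRE constraints, under the assumption $\rho_0=0$, systematically block a positive density of edges (every vertex has $\kappa_v\leq 3$, strictly less than the maximum degree $4$). The key idea is to interpose a percolation model $\tilde\omega_t$ between CDPRE and Bernoulli percolation whose critical point can be pinned down by standard sharp-threshold tools and shown to lie strictly above $\tfrac12$. The desired conclusion $\bar t_c(\rho)>\tfrac12$ then follows by stochastic domination.

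The intermediate model $\tilde\omega_t$ should be chosen to satisfy three competing demands: (i) a coupling $\omega_t\preceq\tilde\omega_t$ strong enough that subcriticality of $\tilde\omega_t$ implies finite expected cluster size in $\omega_t$, hence $\bar t_c(\rho)\geq\tilde t_c$; (ii) enough structural simplicity---finite-range dependence, monotonicity in $t$, and a description as a factor of an i.i.d.\ field on sites and edges---to fall under the scope of the randomized-algorithm / OSSS framework of Duminil-Copin--Raoufi--Tassion, which then provides a sharp phase transition with exponential decay of the cluster radius below $\tilde t_c$; and (iii) enough of the constraint effect retained that its critical density is strictly below the Bernoulli critical density. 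A natural candidate is to truncate the recursive CDPRE blockage rule to a bounded depth: declare an edge open in $\tilde\omega_t$ whenever it is Bernoulli-open and cannot be shown blocked by a short local exploration of incident clocks and constraints. This is more permissive than CDPRE (since long cascades of blockages are ignored) and remains a short-range factor of i.i.d.

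With such a $\tilde\omega_t$ in hand, the argument proceeds in two movements. First, the sharp threshold for $\tilde\omega_t$ follows from an OSSS differential inequality on $\P_{\rho,t}(0\leftrightarrow\partial B(n))$ applied to an exploration algorithm of the cluster of the origin with bounded revealment, in complete analogy with the standard treatment of Bernoulli and Voronoi percolation. Second, the strict inequality $\tilde t_c>\tfrac12$ is established by a finite-size / block renormalization criterion at the Bernoulli critical density: at $t=\tfrac12$ the positive density of constraint-induced blockages strictly reduces the probability of an $N\times N$ box-crossing in $\tilde\omega_{1/2}$ below the threshold required for supercriticality of a coarse-grained bond model, yielding subcriticality by a Peierls-type contour estimate on the renormalized lattice.

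The principal obstacle is the very construction of $\tilde\omega_t$: CDPRE is neither FKG nor monotone in the constraint distribution $\rho$, and its recursive blockage rule creates genuinely long-range dependence, so a naive weakening of the constraints does not automatically produce a valid stochastic comparison. The truncation must be designed carefully so that every edge open in $\omega_t$ is still open in $\tilde\omega_t$ under the common clock-constraint coupling. A secondary, but also delicate, step is the strict inequality at $t=\tfrac12$: absent FKG, the classical Aizenman--Grimmett enhancement is unavailable, and the positive density of constrained vertices must instead be converted into a strict crossing-probability gap through an explicit finite-size criterion combined with the self-duality of Bernoulli$(1/2)$ on $\mathbb{L}^2$.
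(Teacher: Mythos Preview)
Your plan matches the paper's at the strategic level: interpose an intermediate model $\tilde\omega_t$ with $\omega_t\le\tilde\omega_t\le\hat\omega_t$, prove sharpness for $\tilde\omega_t$ via the OSSS machinery, and conclude $\bar t_c(\rho)\ge\tilde t_c>\tfrac12$.

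Where the paper adds real content is in the two steps you yourself flag as obstacles. For the construction of $\tilde\omega_t$, the paper does not truncate the CDPRE rule but makes an explicit, rigid choice: tile $\mathbb Z^2$ by disjoint translates $\Lambda_{r,s}$ of a $6\times 5$ block, single out one central edge $g_{r,s}$ per block, and close $g_{r,s}$ precisely on the clock-ordering event $C_{r,s}=\{\max_{e\in A_{r,s}}U_e<\min_{e\in\E(\Lambda_{r,s})\setminus A_{r,s}}U_e\}$, where $A_{r,s}$ consists of the six edges joining the two endpoints of $g_{r,s}$ to $\partial\overline\Lambda_{r,s}$; every other edge is pure Bernoulli. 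On $C_{r,s}$, each endpoint $v$ of $g_{r,s}$ sees its three $A_{r,s}$-edges ring first and (using only $\kappa_v\ge 1$, i.e.\ $\rho_0=0$) they open successively until $\deg(v)=\kappa_v\in\{1,2,3\}$, so $g_{r,s}$ is necessarily blocked in CDPRE; this gives $\omega_t\le\tilde\omega_t$ under the common clock coupling. The resulting model is finite-range dependent, which is what lets the revealment bound, the Russo-type differential inequality, and the influence estimate go through.

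For the strict inequality $\tilde t_c>\tfrac12$, your dismissal of Aizenman--Grimmett is misplaced, and this is where your proposal diverges from the paper. The paper observes that $\tilde\omega_t$ is an \emph{essential diminishment} of Bernoulli percolation and invokes precisely that framework (citing the corresponding section of the constrained-degree paper). The enhancement/diminishment machinery does not require FKG of the modified model; it proceeds via differential inequalities in the Bernoulli parameter and the diminishment intensity, and needs only that the local rule can, with positive probability, sever a pivotal connection --- which closing $g_{r,s}$ clearly can. This yields the percolation-threshold inequality $t_c^*>\tfrac12$ directly; sharpness then identifies $\tilde t_c=t_c^*$. Your proposed alternative via a finite-size criterion at $t=\tfrac12$ is both unnecessary and, as written, problematic: at Bernoulli criticality crossing probabilities are bounded away from $0$ by RSW, and a small local diminishment has no reason to push any $N\times N$ crossing probability below a fixed renormalization threshold.
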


Let $\theta_n(t)$ denote the probability that the origin is connected to $\partial B(n)$ at time $t$. We omit $\rho$ from the notation to keep it clean. We will prove the following theorem.
\begin{theorem}\label{conn2} Let $\rho$ and $t<\bar{t}_c(\rho)$ be given. There exists $\alpha>0$ such that
$$\theta_n(t)\leq e^{-\alpha n},$$ 
for all $n$. 
\end{theorem}

In Section \ref{proof} we will prove these two theorems. The proof of Theorem \ref{susce} is obtained by showing a sharp phase transition for an intermediate model. The proof of Theorem \ref{conn2} consists of an application of a Simon-Lieb type inequality. In Section \ref{remark} we present some final remarks and open problems.

\section{Proofs}\label{proof}

\subsection{Proof of Theorem \ref{conn2}}\label{sec_exp}

To prove Theorem \ref{conn2} we will apply a Simon-Lieb type inequality on boxes of several lengths. Observe that if the origin is connected by an open path to $\partial B(4n)$, then the origin must be connected by an open path to $\partial B(n)$ and there must exist a vertex $w\in\partial B(2n)$ such that $w$ is connected to $\partial B(4n)$ by an open path using edges on the complement of $B(2n)$ only. The main difficulty here is to control the decay of connectivity and the decay of correlations between events whose occurrence depends only on the state of edges inside $B(n)$ and those depending on the state of edges outside $B(2n)$. We observe that the decay of correlations obtained in Theorem 2 of \cite{SSS} is no longer enough here, and we derive a new decay rate which is improved by a $\log n$ factor. 

In what follows, the notation $\{w\xleftrightarrow{\makebox[1.0cm]{\text{\footnotesize{$A$}}}}B\}$ means that the vertex $w$ is connected to some vertex in $B$ using only edges with both endpoints in $A$. All constants $c_1, c_2, c_3, c_4$ appearing in this section are universal and do not depend on $n$ or $t$.

\begin{proposition}\label{log_dec} Fix $m,n\in\mathbb{N}$ such that $2m<n$ and $w\in \partial B(2m)$. Then,
\begin{eqnarray}
\P_{\rho,t}\left(0\longleftrightarrow \partial B(m),w\xleftrightarrow{\makebox[1.0cm]{\text{\footnotesize{$B(2m)^c$}}}}\partial B(n)\right)&\leq&\P_{\rho,t}(0\longleftrightarrow \partial B(m))\P_{\rho,t}\left(w\longleftrightarrow \partial B(n)\right)\nonumber\\
&+& c_1m\exp\left(-\tfrac 12 m\log m\right)\nonumber,
\end{eqnarray} 
for all $\rho=(\rho_0, \rho_1, \rho_2, \rho_3)$ and $t\in[0,1]$.
\end{proposition}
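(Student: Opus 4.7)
The plan is to introduce a BFS-type edge exploration along decreasing clock times and use it to decouple the two events across an intermediate box. For each edge $e\in\E$, let the \emph{influence cluster} $I(e)$ be the smallest set of edges containing $e$ which is closed under the rule: whenever $f\in I(e)$ and $f'\neq f$ shares an endpoint with $f$ with $U_{f'}\le U_f$, then $f'\in I(e)$. Since the CDPRE decision rule at $e$ only queries the states of incident edges whose clocks are strictly smaller than $U_e$, a short induction on the finite ordering of clocks inside $I(e)$ produces a deterministic function $\Phi$ with $\o_t(e)=\Phi(\{U_f\}_{f\in I(e)},\{\kappa_v\}_{v\in V(I(e))})$ whenever $U_e\le t$, and $\o_t(e)=0$ otherwise.

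Set $N:=\lfloor 3m/2\rfloor$ and define the good event
\[\mathcal{G}:=\bigl\{I(e)\subseteq\E(B(N))\ \forall\,e\in\E(B(m)),\,U_e\le t\bigr\}\cap\bigl\{I(e)\subseteq\E(B(N)^c)\ \forall\,e\in\E(B(2m)^c),\,U_e\le t\bigr\}.\]
Alongside $\o_t$, introduce two auxiliary configurations: $\tilde\o_t$ is the CDPRE process run using only the clocks on $\E(B(N))$ and the constraints on $B(N)$, while $\hat\o_t$ is defined symmetrically from the clocks on $\E(B(N)^c)$ and constraints on $\Z^2\setminus B(N)$. Since $I(e)$ depends only on clocks, applying $\Phi$ to both $\o_t$ and $\tilde\o_t$ shows that $\o_t=\tilde\o_t$ on $\E(B(m))$ on the first half of $\mathcal{G}$, and $\o_t=\hat\o_t$ on $\E(B(2m)^c)$ on the second half. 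Writing $A,C$ for the events in the proposition and $\tilde A,\hat C$ for their analogues in $\tilde\o_t,\hat\o_t$, we obtain $A\cap\mathcal{G}=\tilde A\cap\mathcal{G}$ and $C\cap\mathcal{G}=\hat C\cap\mathcal{G}$. The events $\tilde A$ and $\hat C$ are measurable with respect to clock/constraint families indexed by disjoint sets, hence independent under $\P_{\rho,t}$, and combining this with $|\P_{\rho,t}(\tilde A)-\P_{\rho,t}(A)|\le\P_{\rho,t}(\mathcal{G}^c)$ and its analogue for $C$ yields
\[\P_{\rho,t}(A\cap C)\le \P_{\rho,t}(A)\P_{\rho,t}(C)+O(\P_{\rho,t}(\mathcal{G}^c)).\]

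It remains to estimate $\P_{\rho,t}(\mathcal{G}^c)$. On this event there is a walk of edges $(e_0,\ldots,e_k)$ with consecutive edges sharing an endpoint, $U_{e_0}\le t$, $U_{e_0}\ge U_{e_1}\ge\cdots\ge U_{e_k}$, one extremity in $\E(B(m))\cup\E(B(2m)^c)$ and the other crossing $\partial B(N)$; such a walk must have length $k\ge\lceil m/2\rceil$. A fixed walk of length $k$ has clocks in decreasing order with first clock at most $t$ with probability $t^{k+1}/(k+1)!\le 1/(k+1)!$, while the number of walks of length $k$ whose starting edge lies in the relevant annulus (of width of order $m$) is at most $c\,m\cdot 4^k$. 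A union bound at $k=\lceil m/2\rceil$ combined with Stirling's formula gives
\[\P_{\rho,t}(\mathcal{G}^c)\le c_1\,m\exp\!\left(-\tfrac12 m\log m\right),\]
after enlarging $c_1$ to absorb constants and to handle trivially the finitely many small values of $m$.

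The main technical obstacle is establishing $\o_t=\tilde\o_t$ on the relevant edges inside the first half of $\mathcal{G}$ (and the symmetric statement for $\hat\o_t$). Because CDPRE has no FKG property, no monotone coupling is available; instead one uses the explicit measurability formula $\o_t(e)=\Phi(\{U_f\}_{f\in I(e)},\{\kappa_v\}_{v\in V(I(e))})$, remarking that $\Phi$ is literally the same function in the full and in the auxiliary dynamics and that, on the good event, its inputs coincide, so the outputs must as well.
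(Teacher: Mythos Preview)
Your proof is correct and follows essentially the same strategy as the paper: both arguments identify the ``influence zone'' of an edge as the set reachable by paths of decreasing clock times, bound the probability that this zone crosses an annulus of width $\sim m/2$ by a union bound and Stirling (yielding the $c_1 m\exp(-\tfrac12 m\log m)$ term), and then observe that on the complementary good event the two connection events are measurable with respect to disjoint collections of clocks and constraints, hence independent. The paper phrases this via vertex-based sets $M_t(x)$ and a direct covariance bound, whereas you use edge-based influence clusters $I(e)$ together with explicit auxiliary processes $\tilde\omega_t,\hat\omega_t$ and a fixed separating box $B(\lfloor 3m/2\rfloor)$, but these are cosmetic differences.
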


\begin{proof} We follow the argument in Section 2.1 of \cite{AB}. Fix  $t>0$ and let $M_t(x)$ be the set of vertices $y$ such that there is a path $x,e_1,x_1,e_2,x_2\dots,e_k,y$ with $t>U(e_1)>U(e_2)>\dots>U(e_k)$. This gives, with the aid of Stirling's formula, 
\begin{equation}\label{inf_zone}\P_{\rho,t}(M_t(x)\cap\{x+\partial B(m)\}\neq \emptyset)\leq \frac{4\times3^{m-1}}{m!}\leq \tfrac 12\exp\left(-m\log \left(\frac{m}{3e}\right)\right).
\end{equation}

The first inequality in \eqref{inf_zone} holds since, if $\{M_t(x)\cap\{x+\partial B(m)\}\neq \emptyset)\}$ occurs, then there must exist a self-avoiding path of length $m$ starting at  $x$ such that all clocks ring in order before time $t$. 

Write $M_t(\partial B(n))=\bigcup_{x\in \partial B(n)}M_t(x)$ and let $w\in\partial B(2m)$.
Union bound and $\eqref{inf_zone}$ yields
\begin{align*}\P_{\rho,t}\left(M_t(\partial B(m))\cap M_t(w)\neq \emptyset\right)&\leq 2\P_{\rho,t}\left(M_t(\partial B(m))\cap\partial B(\floor{3m/2 })\neq \emptyset\right)\\
&\leq c_1m\exp\left(-\tfrac{1}{2}m\log m\right),
\end{align*} for $m>9e^2$. Note that on  $\{M_t(\partial B(m))\cap M_t(w)= \emptyset\}$, the events $\{0\longleftrightarrow \partial B(m)\}$ and $\{w\xleftrightarrow{\makebox[1.0cm]{\text{\footnotesize{$B(2m)^c$}}}}\partial B(n)\}$ are determined by disjoint sets of edges and hence the covariance vanishes. Therefore, we obtain
$$Cov\left( \mathlarger{\mathlarger{\mathbbm{1}}}\{0\longleftrightarrow \partial B(m)\}, \mathlarger{\mathlarger{\mathbbm{1}}}\{w\xleftrightarrow{\makebox[1.0cm]{\text{\footnotesize{$B(2m)^c$}}}}\partial B(n)\}\right)\leq c_1m\exp\left(-\tfrac{1}{2}m\log m\right).$$ The proof follows by observing that
\begin{align}
\P_{\rho,t}\left(0\longleftrightarrow \partial B(m),\  w\xleftrightarrow{\makebox[1.0cm]{\text{\footnotesize{$B(2m)^c$}}}}\partial B(n)\right)&\leq
\P_{\rho,t}(0\longleftrightarrow \partial B(m))\P_{\rho,t}\left(w\longleftrightarrow \partial B(n)\right)\nonumber\\
&\hspace{-1cm}+Cov\left( \mathlarger{\mathlarger{\mathbbm{1}}}\{0\longleftrightarrow \partial B(m)\}, \mathlarger{\mathlarger{\mathbbm{1}}}\{w\xleftrightarrow{\makebox[1.0cm]{\text{\footnotesize{$B(2m)^c$}}}}\partial B(n)\}\right).\nonumber
\end{align}
\end{proof}

\hspace{-0.6cm}\textit{Proof of Theorem \ref{conn2}.}  Fix $\rho$, $t<\bar{t}_c(\rho)$, and write $\theta_n(t)\equiv\theta_n$. Following the discussion at the beginning of this section, let us consider boxes of side length $2\floor{\sqrt{n}}$. We have
\begin{equation*}
\theta_n\leq \P_{\rho,t}\left(0\longleftrightarrow \partial B(\floor{\sqrt{n}}), \exists w\in \partial B(2\floor{\sqrt{n}})\mbox{ s.t. }\{w\xleftrightarrow{\makebox[1.0cm]{\text{\footnotesize{$B(2\floor{\sqrt{n}})^c$}}}}\partial B(n)\}\right).
\end{equation*}
Applying union bound and then Proposition $\ref{log_dec}$, we have
\begin{equation}\label{main_constr}
\theta_n\leq \theta_{\floor{\sqrt{n}}}\left(\displaystyle\sum_{w\in\partial B(2\floor{\sqrt{n}})}\P_{\rho,t}(w\longleftrightarrow \partial B(n))\right)+c_1n\exp\left(-\tfrac 12\floor{\sqrt{n}}\log \floor{\sqrt n}\right).
\end{equation}
By translation invariance it holds $$\P_{\rho,t}(w\longleftrightarrow \partial B(n))\leq \theta_{n-2\floor{\sqrt{n}}},$$ for any $w\in\partial B(2\floor{\sqrt{n}})$. Hence
\begin{equation*}\label{sl1}
\theta_n\leq 16\floor{\sqrt{n}}\theta_{\floor{\sqrt{n}}}\theta_{n-2\floor{\sqrt{n}}}+c_1n\exp\left(-\tfrac 12\floor{\sqrt{n}}\log \floor{\sqrt n}\right).
\end{equation*}
Iterating the above $\tfrac 12\floor{\sqrt{n}}$ times and using the same argument for $\theta_{n-2j\floor{\sqrt{n}}}$, $j\in\{1,2,\dots,\tfrac 12\floor{\sqrt{n}}\}$, we obtain
\begin{equation}\label{main_eq}
\theta_n \leq \left(c_2\floor{\sqrt{n}}\theta_{\floor{\sqrt{n}}}\right)^{\tfrac12\floor{\sqrt{n}}}+c_1n\exp\left(-\tfrac 12\floor{\sqrt{n}}\log \floor{\sqrt n}\right)\displaystyle\sum_{i=0}^{\floor{\sqrt{n}}-1}\left(c_2\floor{\sqrt{n}}\theta_{\floor{\sqrt{n}}}\right)^i.
\end{equation}

 It is easy to see that, if $\mathbb{E}_{\rho,t}(|C|)<\infty$, then $\sum_{n\geq 1}\theta_n(t)<\infty$. Since $\{\theta_n(t)\}_n$ is decreasing, an exercise in analysis gives 
 \begin{equation}\label{analysis}\lim_{n\rightarrow \infty}n\theta_n(t)=0.
 \end{equation} 
Hence we can find some $n_0\in\mathbb{N}$ such that 
\begin{equation*}\label{psi_bound2}
c_2\floor{\sqrt{n}}\theta_{\floor{\sqrt{n}}}<e^{-2},
\end{equation*} for all $n\geq n_0$. This gives
\begin{equation*}
\theta_n \leq \exp\left(-\sqrt{n}\right)+c_3n\exp\left(-\tfrac 12\floor{\sqrt{n}}\log \floor{\sqrt n}\right),
\end{equation*} for all $n\geq n_0$. Note that 
\begin{eqnarray*}c_2n\exp\left(-\tfrac 12\floor{\sqrt{n}}\log\floor{{\sqrt{n}}}\right)&=&\exp\left\{-\left[\frac{\floor{\sqrt{n}}}{2}+\frac {\left(\log\floor{{\sqrt{n}}}-1\right)\floor{\sqrt{n}}}{2}-\log\left(c_3n\right)\right]\right\}\\
&\leq& \exp\left(-\tfrac 14\sqrt{n}\right),
\end{eqnarray*}
for all $n$ large enough, and hence
\begin{equation}\label{sqrt_bound} \theta_n\leq 2\exp\left(-\tfrac 14 n^{\frac 12}\right).
\end{equation}

The same reasoning yields, for any $n,k\in \N$,
\begin{equation*}
\theta_{2(k+1)n}\leq c_4 n\theta_n\theta_{2kn}+c_4n\exp\left(-\frac12 n\log n\right).
\end{equation*}
We claim that 
\begin{equation}\label{induct}\theta_{2kN}\leq\frac{1}{2^{k}\alpha^k c_4 N^k},
\end{equation} for several but finitely many $k=1,2,\dots,k_{max}$.
Here $\alpha>1$ can be taken any fixed number, \textit{e.g.} $\alpha=2$.  Also, the number $k_{max}$ will be established below, but it suffices that $k_{max}\geq 7$.

Equation \eqref{sqrt_bound} implies the existence of a large fixed $N\in\N$, and of some constant $c_3$ such that 
$$\theta_N\leq \frac{1}{4\alpha c_4 N^2}.$$  Since $\theta_n$ is non-increasing, we have $\theta_{2N}\leq \theta_N$, and \eqref{induct} follows when $k=1$. For all such $k$ that
$$c_4N\exp\left(-\frac12 N\log N \right)\leq \frac12\times\frac{1}{2^{k+1}\alpha^{k+1}c_3N^{k+1}},$$ (solving this we find $k_{max}$) we obtain
\begin{equation*}
\theta_{2(k+1)N}\leq \frac{c_4N}{(4\alpha c_4N^2)(2^k\alpha^kc_4N^k)}+c_4N\exp\left(-\frac12 N\log N\right)\leq \frac{1}{2^{k+1}\alpha^{k+1}c_4 N^{k+1}}.
\end{equation*}
In particular, loosening on the upper bound $\theta_{2kN}\leq 1/(2^k\alpha^kc_4N^k)$, we have that
\begin{itemize}
\item[i)] for $m\in\{2N,4N,\dots,2k_{max}N\}$ we have $\theta_m\leq \alpha^{-m/(2N)}$,
\item[ii)] letting $2k_{max}N=N'$ and $\alpha'=\alpha^{k_{max}}$, we have $\theta_{N'}\leq \frac{1}{4\alpha'c_4(N')^2}.$
\end{itemize}
Item $ii)$ follows since if $k_{max}\geq 7$, then $2^{k_{max}}N^{k_{max}}\geq 4(2k_{max}N)^2$. Repeating the same argument with $N'$ and $\alpha'$ one obtains a new set of values $m\in\{2N',\dots,2k_{max}N'\}$, for which $\theta_m\leq (\alpha')^{-m/(2N')}=\alpha^{-m/(2N)}$. Continuing inductively we obtain an infinite subsequence $m_1,m_2,\dots$, with the property that $m_{j+1}\leq 2m_j$ for all $j\in\N$, such that $\theta_{m_{j}}\leq \alpha^{-m_j/(2N)}$. Since $\theta_n$ is non-increasing in $n$, it follows that $\theta_n\leq \alpha^{-n/(4N)}$, for all $n>N$.
\qed

\subsection{Proof of Theorem \ref{susce}}\label{sharp}

In this section we prove Theorem \ref{susce}. We break the proof in two parts, assuming first that $0<\rho_0<1$. 
Let $\{U(e)\}_{e\in\E}$ and $\{\kappa_v\}_{v\in\Z^2}$ be given.  Define a new percolation configuration $\eta_t$ at edge $e=\langle u,v \rangle$ as
\begin{equation}\label{model}\eta_{t,e}=\left\{\begin{array}{ll}
\mathbbm{1}_{\{U(e)\leq t\}}&\mbox{if}\quad v=u+(0,1)\,,\\
 \mathbbm{1}_{\{U(e)\leq t\}}\mathbbm{1}_{\{\kappa_u\neq0\}}&\mbox{if}\quad v=u+(1,0).
\end{array}\right.
\end{equation}
This corresponds to a percolation model where vertical edges are independently open
with probability $t$ and horizontal edges are independently open with probability $t(1-\rho_0)$.

Let $\widehat{\omega}_t$ denote an independent Bernoulli bond percolation configuration with parameter $t$. Then, according to the terminology used in \cite{AG} and \cite{BBR}, $\eta_t$ is an \textit{essential diminishment} of $\widehat{\omega}_t$, in the sense that there exists a configuration such that $\widehat{\omega}_t(U)$  have a doubly-infinite open path but such that a doubly-infinite open path is not
present after the diminishment is activated at the origin. To see this, take a Bernoulli configuration $\widehat{\omega}_t$ and consider the following rule. To each vertex $u\in\Z^2$, activate a diminishment at $u$ with probability $(1-\rho_0)$. If the diminishment is activated at $u$, then delete the edge $v=u+(1,0)$. This is clearly an essential diminishment and the diminished configuration has the same law as $\eta_t$. Consequently, based on the results in \cite{AG} and \cite{BBR}, the critical threshold for the model \eqref{model} strictly increases and is therefore larger than 1/2. Moreover, due to the stochastic dominance of the random variable $\omega_{t,e}$ by $\eta_{t,e}$, the desired result can be derived from the sharpness of the phase transition observed for independent inhomogeneous Bernoulli percolation (see \cite{AIB}).

We turn to the case $\rho_0=0$. Based on ideas from \cite{LSSST}, we construct an intermediate model  that dominates the CDPRE process when $\rho_0=0$ and is dominated by independent Bernoulli percolation. We will show that the intermediate model phase transition is sharp, which will give us the desired result.   

Let $\Lambda=\{(x_1,x_2)\in\Z^2:x_1=0,1,2,3,4,5 \mbox{ and }x_2=0,1,2,3,4\}$ and $\overline{\Lambda}=\{(x_1,x_2)\in\Z^2:x_1=1,2,3,4 \mbox{ and }x_2=1,2,3\}$. For each $(r,s)\in\Z^2$, define $\Lambda_{r,s}=\Lambda+(6r,5s)$ and $\overline{\Lambda}_{r,s}=\overline{\Lambda}+(6r,5s)$. Consider the following sets of edges in $\E(\Lambda_{r,s})$:
$$g_{r,s}=\langle(6r+2,5s+2),(6r+3,5s+2)\rangle,$$
$$A_{r,s}=\{e\in\E(\overline{\Lambda}_{r,s}):|e\cap\partial\overline{\Lambda}_{r,s}|=1\}.$$
$$B_{r,s}=\E(\Lambda_{r,s})\setminus(g_{r,s}\cup A_{r,s}).$$

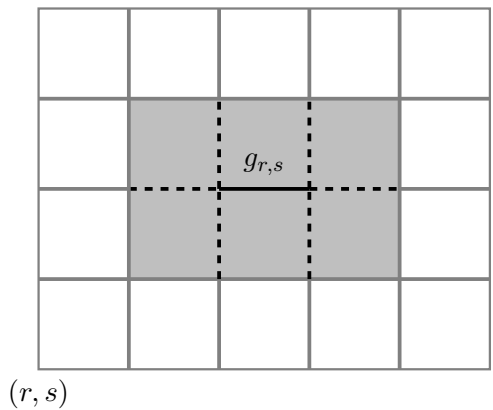
\begin{figure}[h]
\centering
\begin{tikzpicture}[scale=0.03]

\draw[line width=.03cm,gray] (0,0) rectangle (200,160);

\draw[line width=.03cm,gray,fill=lightgray] (40,40) rectangle (160,120);

\draw[line width=0.05cm,gray] (0,40) .. controls (200,40) and (200,40) .. (200,40);
\draw[line width=0.05cm,gray] (0,80) .. controls (40,80) and (40,80) .. (40,80);
\draw[line width=0.05cm,gray] (160,80) .. controls (200,80) and (200,80) .. (200,80);
\draw[line width=0.05cm,gray] (0,120) .. controls (200,120) and (200,120) .. (200,120);
\draw[line width=0.05cm,dashed] (40,80) .. controls (80,80) and (80,80) .. (80,80);
\draw[line width=0.05cm,dashed] (120,80) .. controls (160,80) and (160,80) .. (160,80);

\draw[line width=0.05cm,gray] (40,0) .. controls (40,160) and (40,160) .. (40,160);
\draw[line width=0.05cm,gray] (80,0) .. controls (80,40) and (80,40) .. (80,40);
\draw[line width=0.05cm,gray] (120,0) .. controls (120,40) and (120,40) .. (120,40);
\draw[line width=0.05cm,gray] (80,120) .. controls (80,160) and (80,160) .. (80,160);
\draw[line width=0.05cm,gray] (120,120) .. controls (120,160) and (120,160) .. (120,160);
\draw[line width=0.05cm,gray] (160,0) .. controls (160,160) and (160,160) .. (160,160);
\draw[line width=0.05cm,dashed] (80,40) .. controls (80,80) and (80,80) .. (80,80);
\draw[line width=0.05cm,dashed] (80,80) .. controls (80,120) and (80,120) .. (80,120);
\draw[line width=0.05cm,dashed] (120,40) .. controls (120,80) and (120,80) .. (120,80);
\draw[line width=0.05cm,dashed] (120,80) .. controls (120,120) and (120,120) .. (120,120);

\draw[line width=0.05cm,black] (80,80) .. controls (120,80) and (120,80) .. (120,80);
\draw (0,0)node[below] {$(r,s)$};
\draw (100,100)node[below] {\normalsize $g_{r,s}$};

\begin{scope}[shift={(-33,0)}]
\begin{scope}[shift={(0,50)}]
\draw[line width=.03cm] (10,20) rectangle (10,20);
\draw[line width=.03cm] (10,20) rectangle (10,20);
\end{scope}
\end{scope}
\end{tikzpicture}
\caption{$\Lambda_{r,s}$ (larger box), $\overline{\Lambda}_{r,s}$ (gray box) and the edge $g_{r,s}$. $A_{r,s}$ consists of the dashed edges.}
\label{Lambda}
\end{figure}

The intermediate model is constructed as follows: let $\{U_e\}_{e\in\E}$ be an independent collection of uniform random variables on $[0,1]$ with corresponding product measure $\mathbb{P}$ and define the event 
$$C_{r,s}=\left\{U\in[0,1]^{\E}: \displaystyle\max_{e\in A_{r,s}}U(e)<\displaystyle\min_{e\in\E(\Lambda_{r,s})\setminus A_{r,s}}U(e)\right\}.$$ 
See Figure \ref{Lambda} for a sketch of the boxes and edges involved in the construction. 

A configuration of the intermediate model is a function $$\widetilde{\omega}_t:[0,1]^{\E}\longrightarrow \{0,1\}^{\E}$$ such that
\begin{equation}\label{tilde}\widetilde{\omega}_{t,e}=\left\{\begin{array}{ll}
\mathbbm{1}_{\{U(e)\leq t\}}&\mbox{if}\quad e\notin  \cup_{r,s}\{g_{r,s}\}\,,\\
 \mathbbm{1}_{\{U(e)\leq t\}}\mathbbm{1}_{C^c_{r,s}}&\mbox{if}\quad e=g_{r,s}.
\end{array}\right.
\end{equation}

Note that this has the effect of "diminishing" the percolation configuration by
changing the state of some edges from $t$-open to $t$-closed. We note that there are no constraints in the intermediate model.  Write $\hat{t}_c$ and $\tilde{t}_c$ for the susceptibility critical thresholds (the supremum of $t\in [0,1]$ such that the mean size of the open cluster is finite a.s.) of Bernoulli percolation and the intermediate model, respectively. Note that $\widetilde{\omega}_{t,e}$ can be obtained through a standard coupling (using the same variables $U(e)$) with the CDPRE model. In particular, we obtain $\widetilde{\omega}_{t,e}\geq \omega_{t,e}$ for all $t\in[0,1]$ and for all $e\in\E$, whenever $\rho_0=0$.

 Denoting an independent Bernoulli configuration of parameter $t$ by $\widehat{\omega}_t$, we observe that $\widetilde{\omega}_t$ is an essential diminishment of $\widehat{\omega}_t$.  More precisely, let $W=\{x\in\mathbb{Z}^2: x=(4r+1,3s+1)\mbox{ for some } (r,s)\}$, that is, $W$ consists of those vertices that are left-end points of some $g_{r,s}$, and  consider making a diminishment at each vertex $x\in W$. Let $\eta=(\eta(x): x\in W)$ be a vector lying in the space $\Xi=\{0,1\}^W$, where we  interpret the value $\eta(x)=1$ as meaning that the diminishment at $x$ is activated. Assuming that the random variables $\{\eta(x)\}_{x\in W}$ are i.i.d. Bernoulli with mean $\gamma=1$, then,  when activated, the diminishment acts on $\Lambda_{r,s}$ by deleting the edge $g_{r,s}$ ,whenever $C_{r,s}$ occurs. Therefore, a second application of the main result in \cite{AG} and \cite{BBR} gives that the critical threshold of the intermediate model is strictly larger than $1/2$.

Assuming that the intermediate model phase transition is sharp, we have the inequality  
$$\frac 12= \hat{t}_c< \tilde{t}_c.$$
Since the CDPRE model is dominated by the intermediate model, this gives 
$$\frac 12 < \tilde{t}_c\leq \bar{t}_c(\rho),$$
for all $\rho$ with $\rho_0=0$. 

\begin{remark} We observe that the domination argument described above does not hold when $\rho_0>0$. For instance, suppose $\kappa_{(2,3)}=0$ and $\kappa_{(2,2)}=\kappa_{(3,2)}=3$. If $C_{0,0}$ occurs, then $\widetilde{\omega}_t(g_{0,0})=0$ whilst ${\omega}_{t,g_{0,0}}=1$.  
\end{remark}

Based on the ideas developed in \cite{DRT}, we will prove sharpness of the phase transition for the intermediate model with an application of the OSSS inequality for boolean functions and a suitable randomized algorithm. 

Let us introduce further notation. Assume $I$ is a countable set, and let $(\Omega^I,\pi^{\otimes I})$ be a product probability space with elements denoted by $\omega=(\omega_i)_{i\in I}$. Consider a boolean function $f:\Omega^I\rightarrow \{0,1\}$. An algorithm $\mathbf{T}$ determining $f$ takes a configuration  $\omega$ as an input, and reveals the value of $\omega$ in different coordinates, one by one. At each step, the next coordinate to be revealed depends on the values of $\omega$ revealed so far. This process keeps going until the value of $f$ is the same no matter the values of $\omega$ on the remaining coordinates. For a formal description of a randomized algorithm we refer the reader to \cite{OSSS}.

Denote by $\delta_i(\mathbf{T})$ and $\mathrm{Inf}_i(f)$ the \textit{revealment} and the \textit{influence} of the \textit{i-th} coordinate, respectively, which are defined by
$$\delta_i(\mathbf{T})\coloneqq \pi^{\otimes I}(\mathbf{T}\mbox{ reveals the value of $\omega_i$}),$$
$$\mathrm{Inf}_i(f)\coloneqq \pi^{\otimes I}(f(\omega)\neq f(\omega^*)),$$
where $\omega^*$ is equal to $\omega$ in every coordinate, except the \textit{i-th} coordinate which is resampled independently. The OSSS inequality, introduced in \cite{OSSS} by O'Donnel, Saks, Schramm and Servedio, gives a bound on the variance of $f$ in terms of the influence and the computational complexity of an algorithm for this function. It says that, for any function $f:\Omega^I\rightarrow \{0,1\}$ and any algorithm $\mathbf{T}$ determining $f$,
\begin{equation}\label{OSSS_in}
Var(f)\leq \displaystyle\sum_{i\in I}\delta_i(\mathbf{T})\mathrm{Inf}_i(f).
\end{equation}
 
 The intermediate model is a 5-dependent percolation process and the OSSS inequality can not be directly applied. To overcome this difficulty, we introduce a suitable product space to encode the measure of the intermediate model. We take $\Omega=[0,1]$, $I=\E$ and $\pi^{\otimes I}=\P$. Writing $\mathcal{B}_{n}=\{0\longleftrightarrow \partial B(n)\}$, we are interested in bounding the variance of the boolean function $\mathlarger{\mathlarger{\mathbbm{1}}}_{\widetilde{\omega}_t^{-1}(\mathcal{B}_n)}$ considered as a function from $[0,1]^{\E}$ onto $\{0,1\}$. 

\subsubsection{Bound on the revealment}
Recall the definition of $\widetilde{\omega}_t$ in \eqref{tilde} and denote by $\widetilde{P}_t$ the law of the intermediate model, that is,    
$$\widetilde{P}_t(A)=\mathbb{P}(U\in [0,1]^{\mathcal{E}}\colon \widetilde{\omega}_t(U)\in{A}),$$ for all ${A}\subset\{0,1\}^{\mathcal{E}}$. Write  $\widetilde{\theta}_n(t)=\widetilde{P}_t(\mathcal{B}_n)$ and $S_n(t)=\sum_{k=1}^n \widetilde{\theta}_k(t)$. The next lemma shows the existence of an algorithm determining the boolean function  $\mathlarger{\mathlarger{\mathbbm{1}}}_{\widetilde{\omega}_t^{-1}(\mathcal{B}_n)}$ and gives an upper bound on its revealment. For each $(r,s)\in\Z^2$, write $g_{r,s}=\langle u_{r,s},v_{r,s}\rangle$.

\begin{lemma}\label{reveal_bound} For any $k\in\{0,\dots,n\}$, there exists an algorithm $\mathbf{T}_k$ determining $\mathlarger{\mathlarger{\mathbbm{1}}}_{\widetilde{\omega}_t^{-1}(\mathcal{B}_n)}$ with the property that, for each $e=\langle x_1,x_2\rangle\in\E$,
\begin{equation}\label{reveal_e}\delta_{e}(\mathbf{T}_k)\leq\sum_{i=1,2}\widetilde{P}_t(x_i\leftrightarrow \partial B(k))+\mathlarger{\mathlarger{\mathbbm{1}}}_{\Lambda_{r,s}}(e)\left[\widetilde{P}_t(u_{r,s}\leftrightarrow \partial B(k))+\widetilde{P}_t(v_{r,s}\leftrightarrow \partial B(k))\right].
\end{equation}
\end{lemma}

Once Lemma \ref{reveal_e} is proved, observe that, for any $x\in B(n)$, by summing \eqref{reveal_e} over k, we get
\begin{equation}\label{reveal_sum}
\displaystyle\sum_{k=1}^n\widetilde{P}_t(x_i\leftrightarrow \partial B(k))\leq\sum_{k=1}^n\widetilde{P}_t(x_i\longleftrightarrow \partial B(x_i,d(x_i,\partial B(k))))\leq 2S_n(t),
\end{equation}
where the last inequality follows by translation invariance. Plugging \eqref{reveal_sum} in \eqref{reveal_e} yields

\begin{equation}\label{revealment}\sum_{k=1}^n\delta_{e}(\mathbf{T}_k)\leq \beta S_n(t), 
\end{equation}
for some constant $\beta>0$.

Let $F_n$ denote the set of edges between two vertices within distance $n$ of the origin. We define our algorithm using two growing sequences $\partial B(k)=Z_0\subset Z_1\subset\dots\subset \Z^2$ and $\emptyset=F_0\subset F_1\subset\dots\subset F_n$. At step $m$, we see $Z_m$ as representing the set of vertices that the algorithm found to be connected to $\partial B(k)$, and $F_m$ as the set of edges explored by the algorithm.

\begin{definition}[Algorithm $\mathbf{T}_k$] The algorithm $\mathbf{T}_k$ is defined as follows. Let $e_1, e_2, \dots$ be a fixed ordering of the edges in $E_n$. Write $F_0=\emptyset$ and $Z_0=\partial B(k)$. Assume $Z_m\subset \Z^2$ and $F_m\subset E_n$ are given. 
\begin{enumerate}
\item If there is an edge $e=\langle x,y \rangle\in E_n\setminus F_m$ with $x\in Z_m$ and $y\notin Z_m$, choose the earliest one according to the fixed ordering, set $F_{m+1}=F_m\cup\{e\}$ and write
\begin{equation*}Z_{m+1}=\left\{\begin{array}{ll}
Z_m\cup\{y\}&\mbox{if}\quad \omega_{t,e}=1,\\
 Z_m &\mbox{otherwise}.
\end{array}\right.
\end{equation*}
\item If such $e$ does not exist, write $Z_{m+1}=Z_m$ and $F_{m+1}=F_m\cup\{e\}$.
\end{enumerate}
\end{definition}
Note that, as long as we are in the first case, we are still discovering the connected component of $\partial B(k)$. On the other hand, as soon as we are in the second case, we remain at it. Also, observe that the event where the origin is connected to the boundary of $B(n)$ is already determined before we leave the first case. We are ready to prove Lemma \ref{reveal_bound}.

\medskip

\hspace{-0.6cm}\textit{Proof of Lemma \ref{reveal_bound}.} First, note that the algorithm $\mathbf{T}_k$ discovers the union of all open components of $\partial B(k)$ at time $t$, in particular it determines the function $\mathlarger{\mathlarger{\mathbbm{1}}}_{\widetilde{\omega}_t^{-1}(\mathcal{B}_n)}$. Observe that $e=\langle x,y \rangle\in\Lambda_{r,s}$ is revealed if and only if either $x$, $y$, $u_{r,s}$ or $v_{r,s}$ are connected by a $t$-open path to $\partial B(k)$. Indeed, to determine the status of $g_{r,s}$ all egdes in $\Lambda_{r,s}$ must be revealed. If $e\notin \Lambda_{r,s}$ for all $(r,s)$, then $e$ is revealed if and only if $x$ or $y$ are connected to $\partial B(k)$. This completes the proof.
\qed

\subsubsection{A Russo's type formula}

As before, let $\mathcal{B}_n$ be the event that the origin is connected to the boundary of the box $B(n)$. We have the following Russo's type formula.

\begin{lemma}\label{russo_for} Let $0<\alpha_1<\alpha_2<1$. There exists a constant $q>0$ such that, for all $t\in[\alpha_1,\alpha_2]$, 
$$\frac{d}{dt}\widetilde{P}_t(\mathcal{B}_n)\geq q\sum_{e\in \E(B(n))}\widetilde{P}_t\left(e\mbox{ is pivotal for }\mathcal{B}_n\right).$$ 
\end{lemma}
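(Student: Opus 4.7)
The plan is to mirror the classical proof of Russo's formula, adapted to the monotone coupling of $\{\widetilde{\omega}_t\}_{t\in[0,1]}$ provided by the uniforms $\mathbf{U}=\{U_e\}_{e\in\E}$. Under this coupling each state $\widetilde{\omega}_t(e)$ is non-decreasing in $t$, so every edge has a well-defined opening time $\tau_e\in(0,1]\cup\{\infty\}$: namely $\tau_e=U_e$ for ordinary $e$ and for $e=g_{r,s}$ on $C_{r,s}^c$, while $\tau_e=\infty$ for $e=g_{r,s}$ on $C_{r,s}$. Because almost surely no two $\tau_e$ coincide and $F$ is $\Gamma$-local and increasing, the standard first-order decomposition yields
$$\widetilde{P}_{t+h}(F)-\widetilde{P}_t(F)=\sum_{e\in\E(\Gamma)}\mathbb{P}\!\left(\tau_e\in(t,t+h],\ e\text{ pivotal for }F\text{ in }\widetilde{\omega}_t\right)+O(h^2).$$
Dividing by $h$ and letting $h\to 0^+$ writes $\tfrac{d}{dt}\widetilde{P}_t(F)$ as the sum over $e\in\E(\Gamma)$ of the right-hand derivatives of these joint probabilities.

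Each derivative would then be bounded below by $q\,\widetilde{P}_t(e\text{ pivotal for }F)$ via two ingredients. First, the opening-time density of each edge at $t$ is uniformly bounded below on $[\alpha_1,\alpha_2]$: for ordinary $e$ it equals $1$, while for $e=g_{r,s}$ it equals $\mathbb{P}(C_{r,s}^c\mid U_{g_{r,s}}=t)\geq 1-\binom{|\E(\Lambda_{r,s})|}{|A_{r,s}|}^{-1}>0$ by a symmetry computation on the order statistics of the uniforms inside the block (the bound is universal because the block geometry is fixed). Second, the event $\{e\text{ pivotal in }\widetilde{\omega}_t\}$ depends on $U_e$ only through the state of the unique special edge $g_{r,s}$ of the block containing $e$, and this dependence is a step function of $U_e$ with at most one threshold. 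Conditioning on $\mathbf{U}_{-e}=\{U_f:f\neq e\}$ and case-analysing whether the threshold lies to the left or right of $t$, one checks that the joint event in the displayed sum captures a universal positive fraction of the pivotality indicator.

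I expect the main obstacle to be this second ingredient, because pivotality can genuinely depend on $U_e$ through the state of $g_{r,s}$. One must classify, for each $\mathbf{U}_{-e}$, whether $C_{r,s}$ is \emph{active}, that is, sensitive to $U_e$, and then locate the resulting threshold inside $[0,1]$; the comparison between $\mathbb{P}(e\text{ pivotal}\mid U_e=t)$ and $\widetilde{P}_t(e\text{ pivotal})=\int_0^1\mathbb{P}(e\text{ pivotal}\mid U_e=u)\,du$ is only useful once one knows the threshold does not concentrate near one endpoint of $[\alpha_1,\alpha_2]$. The hypothesis $0<\alpha_1<\alpha_2<1$ together with the uniform positivity of the opening density and the finite universal size of $\Lambda_{r,s}$ is precisely what lets one extract a constant $q=q(\alpha_1,\alpha_2)>0$ independent of $F$, $\Gamma$, $e$, and $t$.
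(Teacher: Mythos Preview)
Your outline is essentially the paper's own argument: both use the monotone coupling, the one-edge decomposition
\[
\widetilde P_{t+\delta}(F)-\widetilde P_t(F)=\sum_{e\in\E(\Gamma)}\mathbb P\bigl(W_{t,\delta}=\{e\},\ \widetilde\omega_{t+\delta}\in F,\ \widetilde\omega_t\notin F\bigr)+o(\delta),
\]
and then a case split according to whether $e$ lies outside every $\E(\Lambda_{r,s})$, equals some $g_{r,s}$, or lies in $A_{r,s}\cup B_{r,s}$.

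Where you are vague is exactly where the paper is explicit, and your first ingredient as stated does not quite close the case $e=g_{r,s}$. You bound the \emph{marginal} density $\mathbb P(C_{r,s}^c\mid U_{g_{r,s}}=t)$, but what is needed is a lower bound on $\mathbb P(C_{r,s}^c\mid U_{g_{r,s}}=t,\ g_{r,s}\text{ pivotal})$, and $C_{r,s}^c$ is \emph{not} independent of pivotality (both are functions of the remaining block uniforms $\{U_f:f\in\E(\Lambda_{r,s})\setminus\{g_{r,s}\}\}$). The paper resolves this by replacing $C_{r,s}^c$ with a concrete sub-event $X=\{U_{f_1}>U_{f_2}\}\subset C_{r,s}^c$ for a fixed pair $f_1\in A_{r,s}$, $f_2\in\E(\Lambda_{r,s})\setminus A_{r,s}$; since $X\cap\{g_{r,s}\text{ pivotal}\}$ does not involve $U_{g_{r,s}}$, one can factor out $\mathbb P(U_{g_{r,s}}\in(t,t+\delta])=\delta$ and then invoke continuity in $t$ plus compactness of $[\alpha_1,\alpha_2]$ to extract $M_1>0$. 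For the third case $e\in A_{r,s}\cup B_{r,s}$ the paper similarly intersects with $Y=\{U_{g_{r,s}}>t\}$, on which $\widetilde\omega_t(g_{r,s})=0$ irrespective of $C_{r,s}$, so that $Y\cap\{e\text{ pivotal}\}$ no longer depends on $U_e$; again compactness gives $M_2>0$. Your threshold/step-function picture is the right intuition for why such a decoupling is possible, but you should make the auxiliary events explicit rather than appeal to a density that is only marginal.
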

\begin{proof}
Let $\delta>0$. Then,
\begin{align}\label{russo_1}
    \widetilde{P}_{t+\delta}(\mathcal{B}_n)-\widetilde{P}_t(\B_n)&=\mathbb{P}\left(\widetilde{\omega}_{t+\delta}\in \B_n,\ \widetilde{\omega}_t\notin \B_n\right)\nonumber\\
    &=\mathbb{P}(\widetilde{\omega}_{t+\delta}\in \B_n,\ \widetilde{\omega}_t\notin \B_n,\ \exists\,  e\in \E(B_n) \mbox{ s.t. } t<U(e)\leq t+\delta).
\end{align}
Let $W_{t,\delta}$ be the random set of edges $f$ such that $t<U(f)\leq t+\delta$. Clearly, 
\begin{align}\label{russo_2}
\mathbb{P}(|W_{t,\delta}|\geq 2)=o(\delta).    
\end{align}
From \eqref{russo_1} and \eqref{russo_2} we obtain 
\begin{align*}
  \widetilde{P}_{t+\delta}(\B_n)-\widetilde{P}_t(\B_n)&=\mathbb{P}(\widetilde{\omega}_{t+\delta}\in \B_n,\ \widetilde{\omega}_t\notin \B_n,\ |W_{t,\delta}|=1)+o(\delta)\nonumber\\
  &=\sum_{e\in \E(B(n))}\mathbb{P}\left(\widetilde{\omega}_{t+\delta}\in \B_n ,\ \widetilde{\omega}_t\notin \B_n,\ W_{t,\delta}=\{e\}\right)+o(\delta).
\end{align*}
 We now consider three cases. Remember that $\E(\Lambda_{r,s})=\{g_{r,s}\}\cup A_{r,s}\cup B_{r,s}$.  First, let $e\in\E(B(n))-\bigcup_{r,s}\E(\Lambda_{r,s})$. Then, 
 \begin{align*}
    \mathbb{P}\left( \widetilde{\omega}_{t+\delta}\in \B_n,\ \widetilde{\omega}_t\notin \B_n,\ W_{t,\delta}=\{e\}\right)&=\mathbb{P}(\ e \mbox{ is pivotal for } \B_n \mbox{ in }\widetilde{\omega}_t,\ W_{t,\delta}=\{e\})+o(\delta)\\
    &=\delta\times\mathbb{P}(\ e \mbox{ is pivotal for } \B_n \mbox{ in }\widetilde{\omega}_t)+o(\delta)\\
    &=\delta\times\widetilde{P}_t(\ e \mbox{ is pivotal for } \B_n)+o(\delta).
\end{align*}
Now let $e=g_{r,s}=\langle u_{r,s},v_{r,s}\rangle$ for some pair $(r,s)$. Consider the event $X=\{U(\langle v_{r,s},v_{r,s}+(1,0)\rangle)> t+\delta\}$. This gives the inclusion
    \begin{align*}
    \left\{ \widetilde{\omega}_{t+\delta}\in \B_n,\ \widetilde{\omega}_t\notin \B_n,\ W_{t,\delta}=\{e\}\right\}\supset\left\{{X},\ e \mbox{ is pivotal for }\B_n \mbox{ in }\widetilde{\omega}_t,\ W_{t,\delta}=\{e\}\right\}.
\end{align*}
Note that the event $X\cap\{\ e \mbox{ is pivotal for } \B_n \mbox{ in } \widetilde{\omega}_t\}$ depends only on the variables $U(f)$ with $f\neq g_{r,s}$. Hence
\begin{align*}
    \mathbb{P}(X,\ e \mbox{ is pivotal for }\B_n \mbox{ in }\widetilde{\omega}_t,\ W_{t,\delta}=\{e\})
    &=\mathbb{P}(X,\ e \mbox{ is pivotal for }\B_n \mbox{ in }\widetilde{\omega}_t)\mathbb{P}( W_{t,\delta}=\{e\}).
\end{align*}
Since $\mathbb{P}(X|\ e \mbox{ is pivotal for } \B_n \mbox{ in }\widetilde{\omega}_t)>0$ for all $t\in[\alpha_1,\alpha_2]$, and since the function $t\rightarrow \mathbb{P}(\widetilde{\omega}_t \in A)$ is continuous for any local event $A$, Weierstrass Theorem implies the existence of a constant $M_1>0$ such that 
$$ \mathbb{P}(X,\ e \mbox{ is pivotal for }\B_n \mbox{ in }\widetilde{\omega}_t,\ W_{t,\delta}=\{e\})\geq M_1\delta\times \widetilde{P}_t(\ e \mbox{ is pivotal for } \B_n).$$
Finally, let $e\in A_{r,s}\cup B_{r,s}$ and denote $Y=\{U(g_{r,s})>t\}$. Note that
  \begin{align*}
    \left\{ \widetilde{\omega}_{t+\delta}\in \B_n,\ \widetilde{\omega}_t\notin \B_n,\ W_{t,\delta}=\{e\}\right\}&=\left\{\ e \mbox{ is pivotal for }\B_n \mbox{ in }\widetilde{\omega}_t,\ W_{t,\delta}=\{e\}\right\}\\
    &\supset\left\{Y,\ e \mbox{ is pivotal for }\B_n \mbox{ in }\widetilde{\omega}_t,\ W_{t,\delta}=\{e\}\right\}.
\end{align*}
Note that the event  $Y\cap\{\ e \mbox{ is pivotal for\ }\B_n \mbox{ in } \widetilde{\omega}_t\}$ depends only on the variables $U(f)$ with $f\neq e$. Therefore, as in the previous case, there exists a constant $M_2>0$ such that
\begin{align*}
    \mathbb{P}\left(Y, e \mbox{ is pivotal for } \B_n \mbox{ in }\widetilde{\omega}_t, W_{t,\delta}=\{e\}\right)&=\mathbb{P}(Y, e \mbox{ is pivotal for } \B_n \mbox{ in }\widetilde{\omega}_t) \mathbb{P}(W_{t,\delta}=\{e\})\\
    &\geq M_2\delta\times\widetilde{P}_t(e \mbox{ is pivotal for }\B_n).
\end{align*}
Taking $q=\min\{M_1,M_2\}$ we obtain
$$
    \widetilde{P}_{t+\delta}(\B_n)-\widetilde{P}_t(\B_n)\geq \delta q\sum_{e\in \E(B(n))}\widetilde{P}_t(e\mbox{ is pivotal for }\B_n)+o(\delta).
$$
The result follows by dividing both sides by $\delta$ and  taking the limit when $\delta$ goes to zero.

\end{proof}

\subsubsection{A bound on the influences}

We now seek for a bound on the influence of an edge $e\in \E(B(n))$ on $\mathbbm{1}_{\mathcal{B}_n}$, that is, we seek for a bound on $$\mathrm{Inf}_e(\mathbbm{1}_{\mathcal{B}_n})\coloneqq \P\left(U: \mathbbm{1}_{\mathcal{B}_n}(\widetilde{\omega}_t(U))\neq\mathbbm{1}_{\mathcal{B}_n}(\widetilde{\omega}_t(U^*))\right),$$ where $U$ is equal to $U^*$ in every edge, except edge $e$ which is resampled independently. We do this in two steps. First, assume $e\in\E(\Gamma)-\bigcup_{r,s}\E(\Lambda_{r,s})$ or $e=g_{r,s}$ for some pair $(r,s)$. In this case, the probability that the state of the indicator function change is 
 \begin{align*}\mathrm{Inf}_e(\mathbbm{1}_{\mathcal{B}_n})&=\P\left(U(e)>t,U^*(e)<t, \mbox{ $e$ is pivotal for $\mathcal{B}_n$}\right)+\P\left(U(e)<t, U^*(e)>t, \mbox{ $e$ is pivotal for $\mathcal{B}_n$}\right)\\
&\leq 2\widetilde{P}_t(e\mbox{ is pivotal for }\mathcal{B}_n).
\end{align*}

Now let $e\in \E(\Lambda_{r,s})\setminus g_{r,s}$. In this case, \begin{align*}\mathrm{Inf}_e(\mathbbm{1}_{\mathcal{B}_n})&\leq\P\left(U: \mathbbm{1}_{\mathcal{B}_n}(\widetilde{\omega}_t(U))\neq\mathbbm{1}_{\mathcal{B}_n}(\widetilde{\omega}_t(U^*)), U_{g_{r,s}}>t\right)\\
&+\P\left(U: \mathbbm{1}_{\mathcal{B}_n}(\widetilde{\omega}_t(U))\neq\mathbbm{1}_{\mathcal{B}_n}(\widetilde{\omega}_t(U^*)), U(g_{r,s})\leq t\right).
\end{align*}
If $U(g_{r,s})>t$ and the indicator of $\mathcal{B}_n$ is changed, then $e$ must be pivotal for $\mathcal{B}_n$. If $U(g_{r,s})\leq t$ and the indicator of $\mathcal{B}_n$ is changed, then either $e$ or $g_{r,s}$ must be pivotal for $\mathcal{B}_n$. Putting all together, we obtain 
\begin{equation}\label{bound_influence}
\sum_{e\in B(n)}{\mathrm{Inf}_e}(\mathbbm{1}_{\mathcal{B}_n})\leq \gamma \sum_{e\in B(n)}\widetilde{P}_t(e\mbox{ is pivotal for }\mathcal{B}_n),
\end{equation}
for some constant $\gamma>0$.

Let $t_c^*$ denote the percolation critical threshold for the intermediate model. By stochastic dominance and the results of \cite{LSSST} we know that $1/2 <t_c^*<1$. We now prove that the intermediate model undergoes a sharp phase transition, a fact from which Theorem \ref{susce} is a corollary. 

\begin{theorem}\label{sharpness} Consider the intermediate model on $\Z^2$.
\begin{enumerate}
\item For $t<t_c^*$, there exists $c_t>0$ such that for all $n\geq 1$, $\widetilde{\theta}_n(t)\leq \exp(-c_tn)$.
\item There exists $c>0$ such that for $t>t_c^*$, $\widetilde{P}_t(0\longleftrightarrow \infty)\geq c(t-t_c^*)$.
\end{enumerate}
\end{theorem}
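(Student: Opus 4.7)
The plan is to apply the OSSS-based sharpness scheme of Duminil-Copin, Raoufi and Tassion to the intermediate model on the product space $\prt{[0,1]^{\E},\mathbb{P}}$. All four ingredients are already in hand: the OSSS inequality \eqref{OSSS_in}, the revealment bound (Lemma \ref{reveal_bound}), the influence bound \eqref{bound_influence}, and Russo's formula (Lemma \ref{russo_for}). The task is to assemble them into a single differential inequality and then run the standard dichotomy.

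First I would apply OSSS to the Boolean function $\mathlarger{\mathlarger{\mathbbm{1}}}_{\widetilde{\omega}_t^{-1}(\mathcal{B}_n)}$ with each of the $n$ algorithms $\mathbf{T}_1,\dots,\mathbf{T}_n$, sum the resulting inequalities over $k=1,\dots,n$, and exchange the order of summation. Using Lemma \ref{reveal_bound} to control $\sum_{k=1}^{n}\delta_e(\mathbf{T}_k)$, the influence bound \eqref{bound_influence}, and Russo's formula applied with $F=\mathcal{B}_n$ (a $B(n)$-local increasing event) on a compact sub-interval of $(0,1)$, this yields
$$
n\,\widetilde{\theta}_n(t)\prt{1-\widetilde{\theta}_n(t)} \,\leq\, \frac{4\beta\gamma}{q}\,S_n(t)\,\widetilde{\theta}_n'(t).
$$
For $n$ large enough that $\widetilde{\theta}_n(t)\leq 1/2$ this rearranges to the master inequality
$$
\frac{d}{dt}\log\widetilde{\theta}_n(t) \,\geq\, \frac{q}{8\beta\gamma}\cdot\frac{n}{S_n(t)}.
$$

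For part (1), set $\bar{t}_c^{\,*}:=\sup\{t\in[0,1]:S_\infty(t):=\sum_{k\geq 1}\widetilde{\theta}_k(t)<\infty\}$, so that $\bar{t}_c^{\,*}\leq t_c^{*}$. If $t_0<\bar{t}_c^{\,*}$ and $t_1\in(t_0,\bar{t}_c^{\,*})$, then $S_n(s)\leq S_\infty(t_1)<\infty$ uniformly in $n$ on $[t_0,t_1]$; integrating the master inequality from $t_0$ to $t_1$ and using $\widetilde{\theta}_n(t_1)\leq 1$ yields
$$
\widetilde{\theta}_n(t_0) \,\leq\, \exp\!\prt{-\frac{q(t_1-t_0)}{8\beta\gamma\,S_\infty(t_1)}\,n},
$$
proving exponential decay on $[0,\bar{t}_c^{\,*})$. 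It remains to show $\bar{t}_c^{\,*}=t_c^{*}$. I would argue by contradiction: if $\bar{t}_c^{\,*}<t_c^{*}$ then on the open interval $(\bar{t}_c^{\,*},t_c^{*})$ one has simultaneously $\widetilde{\theta}_n(t)\to 0$ (subcriticality) and $S_\infty(t)=\infty$. Iterating the master inequality along dyadic scales $n=2^{k}$ --- the Duminil-Copin-Raoufi-Tassion bootstrap, which never invokes FKG --- forces $\widetilde{\theta}_n(t)$ to decay fast enough that $S_\infty(t)<\infty$, a contradiction. Hence $\bar{t}_c^{\,*}=t_c^{*}$ and part (1) holds for every $t<t_c^{*}$.

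For part (2) I would pass to the limit $n\to\infty$: by monotonicity $\widetilde{\theta}_n(t)\downarrow\theta_\infty(t):=\widetilde{P}_t(0\leftrightarrow\infty)$, and by Cesaro averaging $S_n(t)/n\to\theta_\infty(t)$. For $t>t_c^{*}$ one has $\theta_\infty(t)>0$ and the first displayed inequality above passes to the limit as
$$
\theta_\infty(t)\prt{1-\theta_\infty(t)} \,\leq\, \frac{4\beta\gamma}{q}\,\theta_\infty(t)\,\theta_\infty'(t),
$$
which rearranges to $-\prt{\log(1-\theta_\infty(t))}'\geq q/(4\beta\gamma)$. Integrating from $t_c^{*}$ and using the right-continuity $\theta_\infty(t_c^{*+})=0$ granted by part (1) gives $\theta_\infty(t)\geq 1-\exp\prt{-\tfrac{q}{4\beta\gamma}(t-t_c^{*})}\geq c(t-t_c^{*})$ on a right-neighbourhood of $t_c^{*}$; the linear bound extends to all of $(t_c^{*},1]$ by monotonicity of $\theta_\infty$. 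The main obstacle is the dyadic bootstrap underlying the identification $\bar{t}_c^{\,*}=t_c^{*}$: because the intermediate model lacks FKG, the classical exponential-decay tools do not apply, and one must arrange the iteration of the master inequality using only the OSSS-type correlation bounds derived here. A secondary but routine issue is justifying differentiability of $\widetilde{\theta}_n$ and the $n\to\infty$ passage in part (2), both of which follow from the locality of $\mathcal{B}_n$ together with monotonicity.
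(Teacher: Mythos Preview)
Your derivation of the differential inequality is correct and coincides with the paper's: combine OSSS (summed over $k$), Lemma~\ref{reveal_bound}, \eqref{bound_influence}, and Lemma~\ref{russo_for} to obtain, for some $\nu>0$,
\[
\widetilde{\theta}_n'(t)\;\geq\;\frac{\nu n}{S_n(t)}\,\widetilde{\theta}_n(t)\bigl(1-\widetilde{\theta}_n(t)\bigr).
\]
From this point the paper does not split into two separate arguments; it absorbs the factor $1-\widetilde{\theta}_n(t)\geq 1-\widetilde{\theta}_1(t_0)$ on a compact interval and invokes Lemma~3 of \cite{DRT} applied to $f_n=\nu\,\widetilde{\theta}_n/(1-\widetilde{\theta}_1(t_0))$, which yields \emph{both} items (1) and (2) simultaneously. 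Your treatment of item~(1) is essentially a sketch of half of that lemma (and you correctly flag the bootstrap $\bar t_c^{\,*}=t_c^{*}$ as the crux), so there the approaches agree.

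The genuine gap is in your separate argument for item~(2). Two steps do not go through as written. First, ``the first displayed inequality passes to the limit'' requires $\widetilde{\theta}_n'(t)\to\theta_\infty'(t)$, which is not justified: $\theta_\infty$ is a decreasing limit of differentiable functions, but nothing here ensures convergence of derivatives, and $\theta_\infty$ need not be differentiable a~priori. Second, the claim ``$\theta_\infty(t_c^{*+})=0$ granted by part~(1)'' is incorrect: part~(1) only gives $\theta_\infty(t)=0$ for $t<t_c^{*}$, i.e.\ information on the \emph{left} of $t_c^{*}$. Right-continuity at $t_c^{*}$ is the statement that there is no percolation at criticality, which is not a consequence of subcritical exponential decay and is exactly what a direct limiting argument cannot deliver. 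The DRT proof of the mean-field lower bound avoids both issues by working entirely at finite $n$ (via the auxiliary averages $T_n(t)=\tfrac{1}{\log n}\sum_{k\leq n}f_k(t)/k$) rather than passing derivatives through the limit. So the fix is simply to let Lemma~3 of \cite{DRT} do item~(2) as well, as the paper does, and drop the limiting argument.
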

\begin{proof}
Applying the OSSS inequality \eqref{OSSS_in} for each $k$ and then summing on $k$, Equation \eqref{revealment} gives 
\begin{equation*}\label{theta_bound}
\widetilde{\theta}_n(t)(1-\widetilde{\theta}_n(t))\leq \frac{\beta S_n(t)}{n}\sum_{e\in B(n)}\mathrm{Inf_{e}}(\mathbbm{1}_{\mathcal{B}_n}).
\end{equation*}
Equation \eqref{bound_influence} and Lemma \ref{russo_for} give
\begin{equation*}\label{theta_bound2}\sum_{e\in\E(B(n))}\mathrm{Inf_{e}}(\mathbbm{1}_{\mathcal{B}_n})\leq \gamma q^{-1}\frac{d}{dt}\widetilde{\theta}_t(n).
\end{equation*}
Hence, there is a constant $\nu>0$ such that
$$\frac{d}{dt}\widetilde{\theta}_n(t)\geq \frac{\nu n}{ S_n(t)}\widetilde\theta_n(t)(1-\widetilde\theta_n(t)).$$
Fix $t_0\in(t_c^*,\alpha_2)$. Since $\widetilde{\theta}_n(t)$ is increasing in $t$ and $n$, we have $1-\widetilde{\theta}_n(t)\geq 1-\widetilde{\theta}_1(t_0)$ for all $t\leq t_0$. The result follows with an application of Lemma 3 in \cite{DRT} to the function $f_n=\frac{\widetilde{\theta}_n(t)}{\nu (1-\widetilde{\theta}_1(t_0))}$.
\end{proof}

\section{Final remarks}\label{remark}

We finish this paper with a few remarks and also with some unanswered questions. \\

1. Does $t_c(\rho)=\bar{t}_c(\rho)$ hold for any $\rho=(\rho_0, \rho_1, \rho_2, \rho_3)$? \\

One could tackle this problem by showing a sharp phase transition for the CDPRE model, meaning that the radius of the open cluster decays exponentially fast for all $t<t_c(\rho)$. The OSSS method of H. Duminil-Copin, A. Raoufi and V. Tassion (see \cite{DRT} for example) emerges as a promising tool to prove such decay. On one hand, there is a small and well-controlled probability that one needs to look at a far away edge to see what the state of a fixed edge $f$ is (because the sequence of $U(e)$ needs to be decreasing; see also Proposition \ref{log_dec}). Hence, when exploring, it should not be difficult to explore the cluster plus what we need to explore to determine $f$. On the other hand, 
proving a Margulis-Russo type formula seems problematic, given that events of interest are not even monotone in the uniform variables and that the 0-1 variables do not vary nicely in terms of the parameter.\\

2. Does the statement of Theorem \ref{conn2} hold for $d>2$?\\

If we take $d>2$, then there would be an entropy factor of order $n^{d-1}$ in the first term on the r.h.s. of \eqref{main_eq}. In this case we would not have  \eqref{analysis}, which is crucial for our estimate. \\

3. Assume $\rho$ stochastically dominates $\tilde{\rho}$. Does $t_c(\rho)\leq t_c(\tilde{\rho})$ hold?

\section*{Acknowledgements} We are grateful to Rémy Sanchis for several valuable discussions. Diogo C. dos Santos was partially supported by PNPD/CAPES. Roger Silva was partially supported by FAPEMIG (Universal APQ-00774-21). This study was financed in part by the Coordenação de Aperfeiçoamento de Pessoal de Nível Superior – Brasil (CAPES) – Finance Code 001.

\end{document}